\newtheorem{theorem}{Theorem}
\newtheorem*{theorem-non}{Theorem}
\newtheorem{definition}[theorem]{Definition}
\newtheorem{proposition}[theorem]{Proposition}
\newenvironment{proof}[1][Proof]{\textbf{#1.} }{\ \rule{0.5em}{0.5em}}
\begin{document}
       
\begin{center}
{A note on 
$\mathcal{D}$%
-modules on the projective spaces of a class of
$G$%
-representations}

{\let\thefootnote\relax\footnote{2020 Mathematics Subject Classification. Primary 32C38;
Secondary 15B57, 32S25, 32S60.}}\\

 \bigskip
{\large Philibert Nang}
\footnote{D\'epartement de Math\'ematiques, \'Ecole Normale Sup\'erieure, Libreville, Gabon. E-mail: philnang@gmail.com}
\footnote{Max-Planck Institute for Mathematics (MPIM), Vivatsgasse 7, Bonn 53111 Germany. E-mail: pnang@mpim-bonn.mpg.de}

\end{center}

\bigskip
     
\noindent

\textbf{Abstract:}
\\

Consider
$(G, V)$
a finite-dimensional representation of a connected reductive complex Lie group 
$G$
and 
$\mathbb{%
P}\left( V\right) $ 
the projective space of
$V$%
.
Denote by 
$G'$
the derived subgroup of
$G$
and assume that the categorical quotient is one dimensional.  In the case where the representation
$(G, V)$
is also multiplicity-free, it is known from Howe-Umeda \cite{H-U} that 
the algebra of
$G$%
-invariant differential operators
$\Gamma\left(V, \mathcal{D}_V\right)^G$
is a commutative polynomial ring. Suppose that the representation
$(G, V)$
satisfies the abstract Capelli condition: 
$(G, V)$
is an irreducible multiplicity-free representation such that the Weyl algebra
$\Gamma\left(V, \mathcal{D}_V\right)^G$
is equal to the image of the center of the universal enveloping algebra of
$\mathrm{Lie}(G)$
under the differential 
$\tau: \mathrm{Lie}(G) \longrightarrow \Gamma\left(V, \mathcal{D}_V\right)$
of the 
$G$%
-action.\\
Let
$\mathcal{A}$
be the quotient algebra of all 
$G'$%
-invariant differential operators by
those vanishing on 
$G'$%
-invariant polynomials. The main aim of this paper is to prove that there is an
equivalence of categories between the category of regular holonomic 
$%
\mathcal{D}_{\mathbb{P}\left( V\right) }$%
-modules on the complex projective space 
$\mathbb{P}\left( V\right) $
and the quotient category of finitely generated graded 
$\mathcal{A}$%
-modules modulo those supported by 
$\left\{ 0\right\} $%
. This result is a generalization of
\cite[Theorem 3.4]{N0} and of \cite[Theorem 8]{N00}. As an application we give an algebraic/combinatorial
classification of regular holonomic 
$\mathcal{D}_{\mathbb{P}%
\left( V\right) }$%
-modules on the projective space of skew-symmetric matrices.
\\

\textbf{Key words:} Differential operators, invariant differential operators, $\mathcal{D}$%
-modules, multiplicity-free spaces, representations of Capelli type.
\\


\section{Introduction}

Let
$V$ 
be a finite dimensional complex vector space, and 
$%
\mathbb{P}\left(V\right)$
its associated projective space with the actions by a connected reductive Lie group
$G$
and
$\mathbb{P}\left(G\right)$
respectively. Denote by 
$G' = [G, G]$
the derived subgroup of
$G$%
.
Assume that the representation
$\left(G, V\right)$
is multiplicity-free, that is, the associated representation of 
$G$
on polynomials on
$V$%
, 
$\mathbb{C}[V]$%
,
decomposes without multiplicities (see \cite{Le}).
Assume futhermore that
$\left(G, V\right)$
has one-dimensional quotient, i.e., there exists a non-constant polynomial 
$f$
on
$V$
generating the algebra of
$G'$%
-invariant polynomials
(%
$\mathbb{C}[V]^{G'} = \mathbb{C}[f]$%
) such that
$f\not\in \mathbb{C}[V]^{G}$%
.
As usual
$\mathcal{D}_{\mathbb{P}(V)}$ (resp. $%
\mathcal{D}_{V}$%
) stands for the sheaf of rings of differential operators on the projective space $%
\mathbb{P}(V)$
(resp. 
$V$%
). Let
$U\left(\mathfrak{g}\right)$
be the universal enveloping algebra of the Lie algebra of
$G$%
. If the algebra of
$G$%
-invariant differential operators 
$\Gamma\left(V, \mathcal{D}_{V}\right)^G$
is the image of the center of
$U\left(\mathfrak{g}\right)$ 
under the differential of the
$G$%
-action then the representation 
$\left(G, V\right)$
is said to be of Capelli type.
It is known from V. G. Kac (see \cite{Kac})
that such representations have finitely many
$G$%
-orbits 
(resp.
$\mathbb{P}\left(G\right)$%
-orbits) denoted by
$V_{k}$
(resp.
$\widetilde{V}_k$%
).
Let
$\Lambda
:=\bigcup\limits_{k=0}^{n}T_{V_{k}}^{\ast }V$ 
(resp.
$\widetilde{\Lambda}
:=\bigcup\limits_{k=1}^{n}T^{\ast }_{{\widetilde{V}}_{k}}{\mathbb{P}\left(V\right)}$%
)
be the Lagrangian variety union of conormal bundles to the orbits of the action of
$G$
(resp. 
$\mathbb{P}\left(G\right)$%
)
on 
$V$
(resp.
$\mathbb{P}\left(V\right)$%
).
Let 
\textrm{Mod}$%
_{\Lambda }^{\mathrm{rh}}\left( \mathcal{D}_{V}\right) $ 
(resp. 
\textrm{Mod}$%
_{\widetilde{\Lambda }}^{\mathrm{rh}}\left( \mathcal{D}_{\mathbb{P}\left(
V\right) }\right)$
stands for the category of regular holonomic 
$\mathcal{D}_{V}$%
-modules
(resp.
$\mathcal{D}_{\mathbb{P}\left(V\right) }$%
-modules)
whose characteristic variety
is contained in 
$\Lambda$
(resp.
$\widetilde{\Lambda }$%
). They form abelian categories.\\
In this paper we study
regular holonomic 
$\mathcal{D}_{\mathbb{P}\left(V\right) }$%
-modules on the projective space
$\mathbb{P}\left(V\right)$
that are objects of the category 
\textrm{Mod}$_{%
\widetilde{\Lambda }}^{\mathrm{rh}}\left( \mathcal{D}_{\mathbb{P}\left(
V\right) }\right) $%
. Note that certain authors were interested in the description of similar categories
(see \cite
{BM}, \cite{L-W}, 
\cite{N1}, \cite{Rai}). 
We introduce
$\overline{\mathcal{A}}:=\Gamma\left( V,\mathcal{D}_{V}\right)^{G'}$
the Weyl algebra on 
$V$
of 
$G'$%
-invariant differential global algebraic sections in
$\mathcal{D}_{V}$%
. This algebra is well understood (see \cite{H-U}, \cite{Le}, \cite{Ru2}). It is a polynomial algebra on a canonically defined set of generators called the Capelli operators
(see \cite{H-U}, \cite{Le}, \cite{Ru2}, \cite{U}).
Let 
$\mathcal{A}$
be the quotient algebra
of 
$\overline{\mathcal{A}}$
by 
the ideal of operators vanishing on 
$G'$%
-invariant polynomials.\\
We consider the category 
${\mathrm{Mod}}^{\mathrm{gr}}\left( \mathcal{A}\right)$ 
whose objects are finitely generated graded 
$\mathcal{A}$%
-modules. This is an abelian category.
Denote by
$\mathcal{C}\subset {\mathrm{Mod}}^{\mathrm{gr}}\left( \mathcal{A}\right)$ 
the full subcategory of graded
$\mathcal{A}$%
-modules generated by''homogeneous sections of integral degree''. 
Let
$%
\mathcal{C}_{0}\subset \mathcal{C}$ 
stands for the subcategory consisting
of modules with support at the origin
$\left\{ 0\right\} $%
. The category
$\mathcal{C}_{0}$ 
is a Serre subcategory. Then there exists an abelian category
$\mathcal{C}/\mathcal{C}_{0}$
and an exact functor
$ \mathcal{C} \longrightarrow \mathcal{C}/\mathcal{C}_{0} $
which is essentially surjective and whose kernel is 
$\mathcal{C}_{0}$%
. We set 
$\mathcal{C}^{\prime}:=\mathcal{C}/\mathcal{C}_{0}$%
.\\
In \cite[Theorem 25]{N1}, we defined the functor 
$\Psi: {\mathrm{Mod}}_{\Lambda }^{\mathrm{rh}}\left( 
\mathcal{D}_{V}\right) \longrightarrow {\mathrm{Mod}}^{\mathrm{gr}}\left( 
\mathcal{A}\right) $
and we proved that it is an equivalence of categories with quasi-inverse the functor
$\Phi:  {\mathrm{Mod}}^{\mathrm{gr}}\left( 
\mathcal{A}\right) \longrightarrow 
{\mathrm{Mod}}_{\Lambda }^{\mathrm{rh}}\left( 
\mathcal{D}_{V}\right)$
 (where
$\Phi\left(T\right):= \mathcal{D}_{V}\underset{\mathcal{A}}{\otimes} T$%
).
The main aim of
this paper is to prove that the categories 
\textrm{Mod}$_{\widetilde{%
\Lambda }}^{\mathrm{rh}}\left( \mathcal{D}_{\mathbb{P}\left( V\right)
}\right) $
and the
quotient category
$\mathcal{C}^{\prime}$
are equivalent.
To do so, let
$i: V \backslash\{0\} \xhookrightarrow{} V$
be the natural inclusion and
$\pi :V\backslash
\left\{ 0\right\} \longrightarrow \mathbb{P}\left(V\right) $
be the canonical projection, we need first
to prove that the functor obtained by the composition
$\Psi {i_{+}\pi^{+}}$
takes image in the full subcategory
$\mathcal{C}$
and so there exists a canonical functor
$\Psi^{\prime}$
(the dotted one) such that the following diagram commutes:
$$
\begin{tikzcd}
\mathrm{Mod}_{\widetilde{\Lambda }}^{%
\mathrm{rh}}\left( \mathcal{D}_{\mathbb{P}\left( V\right) }\right) \arrow[r,"{i_+\pi^+}"] \arrow[d,dashed,"{\Psi^{\prime}}"] \arrow[bend right]{dd}[black,swap]{\widetilde{\Psi}} & {\mathrm{Mod}}_{\Lambda }^{\mathrm{rh}}\left( 
\mathcal{D}_{V}\right) \arrow[d,"{\Psi}"]\\
\mathcal{C}\arrow[d,""]\arrow[r,""] & {\mathrm{Mod}}^{\mathrm{gr}}\left( 
\mathcal{A}\right) \\
 \mathcal{C}^{\prime}
\end{tikzcd} 
$$
Then we define the functor
$\widetilde{\Psi}: \mathrm{Mod}_{\widetilde{\Lambda }}^{%
\mathrm{rh}}\left( \mathcal{D}_{\mathbb{P}\left( V\right) }\right) \longrightarrow  \mathcal{C}^{\prime}$
obtained by composition of
$\Psi^{\prime}$
with the functor
$ \mathcal{C} \longrightarrow  \mathcal{C}^{\prime}$%
.\\
Next, by the
composition,  we define another functor
$\pi_+i^+\Phi_{|{\mathcal{C}}}:  \mathcal{C} \longrightarrow  \mathrm{Mod}_{\widetilde{\Lambda }}^{%
\mathrm{rh}}\left( \mathcal{D}_{\mathbb{P}\left( V\right) }\right) $
and then we will prove that this functor induces a functor
$\widetilde{\Phi}: \mathcal{C}^{\prime} \longrightarrow \mathrm{Mod}_{\widetilde{\Lambda }}^{%
\mathrm{rh}}\left( \mathcal{D}_{\mathbb{P}\left( V\right) }\right)$%
. The functors 
$\widetilde{\Psi}$
and
$\widetilde{\Phi}$
are quasi-inverse one of each other.
We prove the following theorem:

\begin{theorem-non}
The functor 
$\widetilde{\Psi}: \mathrm{Mod}_{\widetilde{\Lambda }}^{%
\mathrm{rh}}\left( \mathcal{D}_{\mathbb{P}\left( V\right) }\right)
\longrightarrow \mathcal{C}^{\prime}$
is an equivalence of
categories with quasi-inverse the functor
$\widetilde{\Phi}: \mathcal{C}^{\prime} \longrightarrow \mathrm{Mod}_{\widetilde{\Lambda }}^{%
\mathrm{rh}}\left( \mathcal{D}_{\mathbb{P}\left( V\right) }\right)$%
.
\end{theorem-non}

\section{Preliminaries}

Let
$\left(G, V\right)$
be a multiplicity-free representation, i.e., the associated representation of 
$G$
on 
$\mathbb{C}[V]$
decomposes without multiplicities (see \cite{Le}).
As above
$G'$
is the derived subgroup of
$G$%
. Suppose that
$\left(G, V\right)$
has one-dimensional quotient, i.e., there exists a non-constant polynomial 
$f$
on
$V$
such that 
$\mathbb{C}[V]^{G'} = \mathbb{C}[f]$
and
$f\not\in \mathbb{C}[V]^{G}$%
. 
Let
$\mathfrak{g}$
be the Lie algebra of
$G$%
,
 and 
 $U\left(\mathfrak{g}\right)$
be its universal enveloping algebra. Let
$Z\left(U\left(\mathfrak{g}\right)\right)$
be the center for
$U\left(\mathfrak{g}\right)$%
.
There is a homomorphism of algebras 
\begin{equation}
\tau : U\left(\mathfrak{g}\right) \longrightarrow \Gamma\left(V, \mathcal{D}_V\right)\cdot
\end{equation}
\begin{definition}
A multiplicity-free representation 
$\left(G, V\right)$
is said to be of Capelli type if it is irreducible and
$$\tau\left(Z\left(U\left(\mathfrak{g}\right)\right)\right) = \Gamma\left(V,\mathcal{D}_{V}\right) ^{G}\cdot$$
\end{definition}

\subsection{The polynomial ring of invariant differential operators}
From now on,
$\left(G, V\right)$
is a Capelli type representation.
Let 
$\mathcal{W}$ 
be the Weyl algebra on
$V$%
. We recall the description of 
$\overline{%
\mathcal{A}}:=\Gamma \left( V,\mathcal{D}_{V}\right) ^{G'}\subset \mathcal{W}$
the subalgebra of 
$G'$%
-invariant differential operators. Let
$\theta$
be the Euler vector field on
$V$%
. One knows from Levasseur (see \cite{Le} Lemma 4.10)
that there exist a Bernstein-Sato polynomial 
$b$
and finitely many operators 
$\left(E_j\right)_{j=0,\cdots, r}$
called the Capelli operators.
To each 
$E_j$
is associated a polynomial
$b_{E_j}$%
. Let
$x\in V$%
,
$\dfrac{\partial}{\partial{x}} \in \mathcal{D}_V$%
, we set 
$\Delta := f\left(\frac{\partial}{\partial{x}}\right) $%
,
$\Omega_j := E_j - b_{E_j}\left(\theta\right)$
for
$j=0, \cdots, r$%
. Then by (\cite[Theorem 10, p. 391]{N1}) 
\begin{equation}
\overline{\mathcal{A}} = \mathbb{C}\left\langle f, \Delta, \theta, \Omega_1, \cdots, \Omega_r\right\rangle\cdot
\end{equation}

Consider 
$\mathcal{A}$
as the algebra
$\overline{%
\mathcal{A}}$ 
modulo all those operators vanishing on
$G'$%
-invariant functions. We recall (see
\cite[Corollary 12, p. 393]{N1}) the following proposition:

\begin{proposition}
\label{pu}The algebra 
$\mathcal{A}$ 
is generated over $\mathbb{C}$
by 
$f $%
, 
$\Delta $%
, 
$\theta $ 
such that 
\begin{equation*}
\begin{array}{ccccccc}
\left[ \theta ,f\right] = df \text{,} &  \left[ \theta ,\Delta 
\right] =-d\Delta \text{,} &  f\Delta
= \prod\limits_{j=0}^{d-1}\left( \frac{\theta }{d}+\lambda_j\right) \text{, } &  
\Delta f = \prod\limits_{j=0}^{d-1}\left( \frac{\theta }{d}+\lambda_{j} + 1\right)
\end{array}
\end{equation*}
\text{with}
$d\in \mathbb{N}^{*}, \lambda_0 =0, \;\lambda_j\in \mathbb{Q}\cdot$
\end{proposition}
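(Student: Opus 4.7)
The plan is to reduce the generation statement from $\overline{\mathcal{A}}$ to $\mathcal{A}$ by showing that each of the operators $\Omega_j$ is killed in the quotient, and then to establish the three relations by a combination of homogeneity and the Bernstein--Sato functional equation. By equation (2) we already know $\overline{\mathcal{A}}=\mathbb{C}\langle f,\Delta,\theta,\Omega_1,\ldots,\Omega_r\rangle$, so the first task is to prove that each $\Omega_j=E_j-b_{E_j}(\theta)$ lies in the ideal of operators vanishing on $\mathbb{C}[V]^{G'}=\mathbb{C}[f]$. The key point is that under the multiplicity-free and Capelli-type assumptions, the $G'$-isotypic decomposition of $\mathbb{C}[V]$ has one-dimensional pieces $\mathbb{C}\cdot f^{k}$ on which every Capelli operator $E_j$ acts by a scalar equal to $b_{E_j}(dk)$; since $\theta \cdot f^{k}=dk\cdot f^{k}$, it follows that $\Omega_j\cdot f^{k}=0$ for all $k\geq 0$, so $\Omega_j\equiv 0$ in $\mathcal{A}$. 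This forces $\mathcal{A}=\mathbb{C}\langle f,\Delta,\theta\rangle$.

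Next I would verify the two commutation relations directly from homogeneity of the generators. Since $f$ is homogeneous of degree $d$ as a polynomial on $V$, the standard identity for the Euler field gives $[\theta,f]=df$. Dually, $\Delta = f(\partial/\partial x)$ is a differential operator of homogeneous degree $-d$, which yields $[\theta,\Delta]=-d\Delta$. These are algebraic identities in $\overline{\mathcal{A}}$, hence hold a fortiori in $\mathcal{A}$.

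For the two product formulas, I would invoke the Bernstein--Sato equation $\Delta\cdot f^{s+1}=b(s)\,f^{s}$, where $b(s)$ is the $b$-function of the semi-invariant $f$. Because $(G,V)$ is a prehomogeneous vector space of Capelli type with one-dimensional categorical quotient, $b$ has degree $d=\deg f$, and by Kashiwara's rationality theorem its roots are rational and negative; normalising so that one of them is $-1$, we can write $b(s)=\prod_{j=0}^{d-1}(s+\lambda_j+1)$ with $\lambda_0=0$ and $\lambda_j\in\mathbb{Q}$. Applied to $f^{s}$, the identity $\Delta f\cdot f^{s}=b(s)\,f^{s}$ together with $\theta\cdot f^{s}=ds\cdot f^{s}$ shows that the operator $\Delta f-\prod_{j}(\theta/d+\lambda_j+1)$ vanishes on every $f^{k}$, hence is zero in $\mathcal{A}$. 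The second equality $f\Delta=\prod_{j}(\theta/d+\lambda_j)$ then follows either by the analogous argument using $f\Delta\cdot f^{s}=b(s-1)\,f^{s}$, or by combining the commutation relation $[\theta,f]=df$ with the expression for $\Delta f$ to shift the argument of the product.

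The main obstacle is legitimising the passage from ``equality of actions on all $f^{k}$'' to ``equality in $\mathcal{A}$''; this is precisely where the definition of $\mathcal{A}$ as the quotient by operators vanishing on $\mathbb{C}[V]^{G'}$ is used, and where the multiplicity-free hypothesis is essential, because it guarantees that the $b_{E_j}$ are determined by eigenvalues on one-dimensional isotypic components and hence that $E_j\bmod\{\text{vanishing on } \mathbb{C}[f]\}$ is really a polynomial in $\theta$. Together with Kashiwara's rationality theorem providing $\lambda_j\in\mathbb{Q}$, this delivers all the claims of the proposition.
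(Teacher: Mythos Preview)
The paper does not prove this proposition here; it is quoted from \cite[Corollary~12, p.~393]{N1}, so there is no in-text argument to compare against. Your outline is a correct reconstruction of how that result is established: pass from the presentation (2) of $\overline{\mathcal{A}}$ to $\mathcal{A}$ by showing each $\Omega_j$ annihilates $\mathbb{C}[f]$, read off the bracket relations from homogeneity, and extract the two product identities from the Bernstein--Sato equation $\Delta f^{s+1}=b(s)f^{s}$ together with Kashiwara's rationality of the roots and the observation $\Delta\cdot 1=0\Rightarrow b(-1)=0\Rightarrow \lambda_0=0$.

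One phrasing issue worth fixing: you say ``the $G'$-isotypic decomposition of $\mathbb{C}[V]$ has one-dimensional pieces $\mathbb{C}\cdot f^{k}$''. That is not true in general; the $G'$-isotypic components are typically large. What you actually need, and what you use, is only that the $G'$-\emph{invariant} subalgebra is $\mathbb{C}[V]^{G'}=\bigoplus_{k\geq 0}\mathbb{C}\,f^{k}$. The reason each $E_j$ acts on $f^{k}$ by a scalar is that $f^{k}$ is $G$-semi-invariant, hence spans a one-dimensional $G$-submodule of $\mathbb{C}[V]$, and $E_j\in\Gamma(V,\mathcal{D}_V)^{G}$ acts by a scalar on every irreducible $G$-constituent by the multiplicity-free hypothesis. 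The definition of $b_{E_j}$ in \cite{Le} and \cite{N1} is arranged precisely so that this scalar equals $b_{E_j}(dk)=b_{E_j}(\theta)\cdot f^{k}/f^{k}$, which is what makes $\Omega_j f^{k}=0$ immediate. With that correction the argument is complete.
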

\subsection{$\mathcal{D}_V$%
-modules on representations of Capelli type}
We refer the reader to \cite{HTT}, \cite{K2}, \cite{K3}, \cite{K-K}
for notions on 
$\mathcal{D}$%
-modules.
Recall that
$\left(G, V\right)$
is a Capelli type representation
and
$\mathbb{P}\left( V\right) $
the projective space of
$V$%
. As in the introduction, there are a finite number of orbits under the action of
$G$
and
$\mathbb{P}\left(G\right)$
respectively.
We denote by 
$V_{k}$ 
(resp. 
$\widetilde{V}_{k}$%
) these orbits.
Let
$\Lambda
:=\bigcup\limits_{k=0}^{n}T_{V_{k}}^{\ast }V$ 
(resp.
$\widetilde{\Lambda }%
:=\bigcup\limits_{k=1}^{n}T_{\widetilde{V}_{k}}^{\ast }\mathbb{P}\left(V\right)$%
) be the union of conormal bundles to these strata. Denote by 
${\mathrm{Mod}}_{\Lambda }^{\mathrm{rh}}\left( \mathcal{D}_{V}\right) $ 
(resp.
${\mathrm Mod}_{\widetilde{\Lambda}}^{\mathrm rh}\left(\mathcal{D}_{\mathbb{P}\left(V\right)}\right)$%
) the category whose objects are regular holonomic 
$\mathcal{D}_{V}$ 
(resp.
$\mathcal{D}_{\mathbb{P}\left(V\right)}$%
)%
-modules\ with characteristic variety contained in
$\Lambda $ 
(resp.
$\widetilde{\Lambda }$%
).
Let
${\mathrm{Mod}}^{\mathrm{gr}}\left( \mathcal{A}\right)$ 
be the category consisting of graded 
$%
\mathcal{A}$%
-modules of finite type.
We have constructed (see \cite[Theorem 25, p. 408]{N1}) the functor 
$\Psi: {\mathrm{Mod}}_{\Lambda }^{\mathrm{rh}}\left( 
\mathcal{D}_{V}\right) \longrightarrow {\mathrm{Mod}}^{\mathrm{gr}}\left( 
\mathcal{A}\right) $
and we proved that
$\Psi$
is an equivalence of categories with quasi-inverse the functor
$\Phi:  {\mathrm{Mod}}^{\mathrm{gr}}\left( 
\mathcal{A}\right) \longrightarrow 
{\mathrm{Mod}}_{\Lambda }^{\mathrm{rh}}\left( 
\mathcal{D}_{V}\right)$
 (where
$\Phi\left(T\right):= \mathcal{D}_{V}\underset{\mathcal{A}}{\otimes} T$%
).
We recall the theorem 25 of \cite{N1}.

\begin{proposition}\label{tu} The functor
$\Psi: {\mathrm{Mod}}_{\Lambda }^{\mathrm{rh}}\left( 
\mathcal{D}_{V}\right) \longrightarrow {\mathrm{Mod}}^{\mathrm{gr}}\left( 
\mathcal{A}\right) $
is an equivalence of categories with quasi-inverse the functor
$\Phi:  {\mathrm{Mod}}^{\mathrm{gr}}\left( 
\mathcal{A}\right) \longrightarrow 
{\mathrm{Mod}}_{\Lambda }^{\mathrm{rh}}\left( 
\mathcal{D}_{V}\right)$%
.
\end{proposition}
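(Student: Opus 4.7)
The plan is to establish the equivalence by explicitly constructing both functors, verifying they land in the correct categories, and then checking the unit and counit of the resulting adjunction are isomorphisms. I would identify $\Psi$ as the functor sending a module $M$ to $\Gamma(V,M)^{G'}$, equipped with the grading coming from the eigenspace decomposition with respect to the Euler field $\theta$. The action of $\overline{\mathcal{A}}=\Gamma(V,\mathcal{D}_V)^{G'}$ factors through $\mathcal{A}$ on such invariants, since by definition the operators killed in passing to $\mathcal{A}$ are precisely those vanishing on $G'$-invariant functions, and regularity together with the multiplicity-free hypothesis ensures this action is well-defined on $\Gamma(V,M)^{G'}$.

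Step one is well-definedness. For $T\in\mathrm{Mod}^{\mathrm{gr}}(\mathcal{A})$, I would show that $\Phi(T)=\mathcal{D}_V\otimes_{\mathcal{A}}T$ is regular holonomic with characteristic variety in $\Lambda$. Holonomicity uses the explicit presentation of $\mathcal{A}$ from Proposition~\ref{pu} (generators $f,\Delta,\theta$ with the Bernstein-type relations) together with Kac's theorem bounding the $G$-orbits; regularity reduces to regularity of the Bernstein--Sato factorizations $f\Delta=\prod(\theta/d+\lambda_j)$. For $M\in\mathrm{Mod}_\Lambda^{\mathrm{rh}}(\mathcal{D}_V)$, I would show that $\Psi(M)$ is finitely generated graded over $\mathcal{A}$ using that holonomic modules have finite length, that $\mathbb{C}[V]^{G'}=\mathbb{C}[f]$ is one-dimensional, and that the $G'$-isotypic decomposition of $\Gamma(V,M)$ (which exists by reductivity) produces only finitely many generating isotypic components.

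Step two is the unit morphism $T\to\Psi\Phi(T)$. This reduces to the identity $\Gamma(V,\mathcal{D}_V\otimes_{\mathcal{A}}T)^{G'}=T$ via the canonical splitting $\mathcal{D}_V=\overline{\mathcal{A}}\oplus(\text{ops.\ killing }\mathbb{C}[V]^{G'})$ as a right $\overline{\mathcal{A}}$-module, using multiplicity-freeness and the Capelli hypothesis $\tau(Z(U(\mathfrak{g})))=\Gamma(V,\mathcal{D}_V)^G$ to compare $G$-invariants, $G'$-invariants, and the subalgebra generated by the Capelli operators $\Omega_1,\ldots,\Omega_r,\theta$. Step three is the counit $\Phi\Psi(M)\to M$. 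I would argue by dévissage along the $G$-orbit stratification: by Kac's theorem the orbits $V_k$ are finite in number and admit a natural order, and the category $\mathrm{Mod}_\Lambda^{\mathrm{rh}}(\mathcal{D}_V)$ is generated by minimal (intermediate) extensions of local systems on the strata. Since each $V_k$ is a single $G$-orbit, the relevant local systems are classified by characters of the component group and hence visible at the $\mathcal{A}$-module level via the spectrum of the polynomial $\prod(\theta/d+\lambda_j)$. Checking the counit on each simple object and propagating by the five-lemma through the filtration completes the argument.

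The main obstacle is step three: matching simple objects of both categories and verifying the counit on a simple module supported on the closure of an orbit. Because $\Lambda$ is a union of conormal bundles to $G$-orbits but the stratification relevant to $\mathcal{A}$ is cut out by powers of $f$ and $\Delta$, one must relate the $G$-orbit filtration to the $V(f)$-filtration. The Bernstein--Sato type relation $f\Delta=\prod_{j=0}^{d-1}(\theta/d+\lambda_j)$ is the technical lever: its roots parametrize precisely the monodromy of simple $\mathcal{D}_V$-modules along $\{f=0\}$, and matching these parameters with the spectrum of $\theta$ acting on the corresponding $\mathcal{A}$-module is what ultimately forces the counit to be an isomorphism.
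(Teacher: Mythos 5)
The paper does not actually prove this proposition: it is stated purely as a recollection of \cite[Theorem 25]{N1} (``We recall the theorem 25 of \cite{N1}''), so there is no internal proof to compare yours against. Judged on its own terms, your text is a roadmap rather than a proof, and the points where it asserts rather than argues are exactly where the content of \cite{N1} lives. Most concretely: you claim the $\overline{\mathcal{A}}$-action on $\Gamma(V,\mathcal{M})^{G'}$ factors through $\mathcal{A}$ ``since by definition the operators killed in passing to $\mathcal{A}$ are precisely those vanishing on $G'$-invariant functions.'' That is not a definition-chase. An operator annihilating all $G'$-invariant \emph{polynomials} has no a priori reason to annihilate the $G'$-invariant sections of an arbitrary $\mathcal{M}\in{\mathrm{Mod}}_{\Lambda}^{\mathrm{rh}}\left(\mathcal{D}_{V}\right)$; this is a genuine property of modules with characteristic variety in $\Lambda$ and must be proved. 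Likewise, finite generation of $\Psi(\mathcal{M})$ over $\mathcal{A}$, and the claim that $\Phi(T)=\mathcal{D}_V\otimes_{\mathcal{A}}T$ is regular holonomic with characteristic variety in $\Lambda$, are each substantive; the latter does not simply ``reduce to regularity of the Bernstein--Sato factorizations'' without controlling the module along the non-open orbits and at infinity.

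Your step three is the heart of the matter and the least developed. The d\'evissage strategy (verify the counit on simples, propagate by exactness) is the standard one, but the matching of simple objects --- identifying the simple graded $\mathcal{A}$-modules cut out by the roots $\lambda_j$ in Proposition \ref{pu} with the minimal extensions of irreducible local systems on the orbits $V_k$ --- is precisely the long computation carried out in \cite{N1}; your proposal records that it must be done and why it is plausible, but does not do it. In short: the strategy is the right one and is consistent with the source the paper cites, but the two or three decisive verifications are left as assertions, so as it stands this is an outline, not a proof.
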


\section{$\mathcal{D}_{\mathbb{P}(V)}$%
-modules on the projective space 
$\mathbb{P}\left(V\right)$}

This section deals with the study of regular holonomic 
$\mathcal{D}_{\mathbb{P}\left(V\right) }$%
-modules on the projective space
$\mathbb{P}\left(V\right)$
whose characteristic variety is contained in the Lagrangian variety
$\widetilde{\Lambda }$%
.
These objects form an abelian category we have denoted by
\textrm{Mod}$_{\widetilde{%
\Lambda }}^{\mathrm{rh}}\left( \mathcal{D}_{\mathbb{P}\left( V\right)
}\right) $%
.\\
We recall that 
a section 
$s$ 
of a 
$\mathcal{D}_{V}$%
-module
$%
\mathcal{N}$
is said to be homogeneous of integral degree 
$p\in \mathbb{Z}$%
,
if there exists 
$j$ $\in \mathbb{N}$ 
such that 
$(\theta -p)^{j}s=0$%
.\\
As in the introduction, recall that
$\mathcal{C}\subset {\mathrm{Mod}}^{\mathrm{gr}}\left( 
\mathcal{A}\right)$ 
is the full subcategory consisting of graded 
$\mathcal{A}$%
-modules of finite type generated by homogeneous sections of integral degree. Let
$\mathcal{C}%
_{0}\subset \mathcal{C}$ 
be the Serre subcategory consisting of modules
supported by the origin 
$\left\{ 0\right\} $%
. Then by 
\cite[Lemma 12.10.6]{H}
there exits an abelian category
$\mathcal{C}/\mathcal{C}_{0}$
and an exact functor
$\chi:
\mathcal{C} \longrightarrow \mathcal{C}/\mathcal{C}_{0}$
which is essentially surjective and whose kernel is 
$\mathcal{C}_0$
denoted
$\mathrm{Ker}{\left(\chi\right)} =
\mathcal{C}_0$%
.

\subsection{Construction of a functor
$\widetilde{\Psi}: \mathrm{Mod}_{\widetilde{\Lambda }}^{%
\mathrm{rh}}\left( \mathcal{D}_{\mathbb{P}\left( V\right) }\right) \longrightarrow  \mathcal{C}^{\prime}$
}
 Let
$i: V \backslash\{0\} \xhookrightarrow{} V$
be the open embedding and
$\pi :V\backslash
\left\{0\right\} \longrightarrow \mathbb{P}\left(V\right) $
be the canonical projection.
Recall that the functor
$\Psi: {\mathrm{Mod}}_{\Lambda }^{\mathrm{rh}}\left( 
\mathcal{D}_{V}\right) \longrightarrow {\mathrm{Mod}}^{\mathrm{gr}}\left( 
\mathcal{A}\right) $
is an equivalence of categories
(see proposition \ref{tu}).
We prove the following theorem:

\begin{theorem}\label{t00}
The functor obtained by composition
$\Psi {i_{+}\pi^{+}}$
takes image in the full subcategory
$\mathcal{C}$%
.
Then there exists a canonical functor
${\Psi^{\prime}}: \mathrm{Mod}_{\widetilde{\Lambda }}^{%
\mathrm{rh}}\left( \mathcal{D}_{\mathbb{P}\left( V\right) }\right) \longrightarrow  \mathcal{C}$
such that the following diagram commutes:
$$
\begin{tikzcd}
\mathrm{Mod}_{\widetilde{\Lambda }}^{%
\mathrm{rh}}\left( \mathcal{D}_{\mathbb{P}\left( V\right) }\right) \arrow[r,"{i_+\pi^+}"] \arrow[d,dashed,"{\Psi^{\prime}}"] \arrow[bend right]{dd}[black,swap]{\widetilde{\Psi}} & {\mathrm{Mod}}_{\Lambda }^{\mathrm{rh}}\left( 
\mathcal{D}_{V}\right) \arrow[d,"{\Psi}"]\\
\mathcal{C}\arrow[d,"\chi"]\arrow[r,""] & {\mathrm{Mod}}^{\mathrm{gr}}\left( 
\mathcal{A}\right) \\
 \mathcal{C}^{\prime}
\end{tikzcd} 
$$
\end{theorem}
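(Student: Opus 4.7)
The plan has two essentially independent ingredients: first, checking that $i_+\pi^+$ really lands in the source category $\mathrm{Mod}^{\mathrm{rh}}_\Lambda(\mathcal{D}_V)$ of $\Psi$, and second, exploiting the $\mathbb{C}^*$-equivariance built into $\pi^+$ to force integrality of $\theta$-weights.

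For the first point, let $\mathcal{M}\in \mathrm{Mod}^{\mathrm{rh}}_{\widetilde{\Lambda}}(\mathcal{D}_{\mathbb{P}(V)})$. Since $\pi:V\setminus\{0\}\to\mathbb{P}(V)$ is smooth (a principal $\mathbb{C}^*$-bundle), $\pi^+\mathcal{M}$ is regular holonomic with $\mathrm{Char}(\pi^+\mathcal{M})=(d\pi)^{-1}(\widetilde{\Lambda})$; and since $\pi^{-1}(\widetilde{V}_k)=V_k\setminus\{0\}$, this is exactly $\Lambda\cap T^*(V\setminus\{0\})$. The open embedding $i:V\setminus\{0\}\hookrightarrow V$ has complement the single orbit $\{0\}=V_0$, and $i_+$ preserves regular holonomicity; its characteristic variety can acquire only conormals supported on $\{0\}$, which already lie inside $\Lambda$. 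Thus $i_+\pi^+\mathcal{M}\in\mathrm{Mod}^{\mathrm{rh}}_\Lambda(\mathcal{D}_V)$ and $\Psi(i_+\pi^+\mathcal{M})$ is a legitimate object of $\mathrm{Mod}^{\mathrm{gr}}(\mathcal{A})$ by Proposition \ref{tu}.

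For the second point, being the inverse image under the quotient map of a principal $\mathbb{C}^*$-bundle, $\pi^+\mathcal{M}$ carries a canonical $\mathbb{C}^*$-equivariant structure. Consequently its sections decompose into $\mathbb{C}^*$-weight spaces indexed by $\mathbb{Z}$, so the infinitesimal generator of the scaling action---the Euler vector field $\theta$---acts locally finitely with integer eigenvalues. The equivariance survives $i_+$ because $i$ is $\mathbb{C}^*$-equivariant and $\{0\}$ is a fixed point, so $i_+\pi^+\mathcal{M}$ inherits the same integral weight decomposition. Since the grading that $\Psi$ attaches to a regular holonomic $\mathcal{D}_V$-module in \cite[Theorem 25]{N1} is the one induced by the $\theta$-action, we conclude that $\Psi(i_+\pi^+\mathcal{M})$ is generated by sections whose $\theta$-eigenvalues are integers, i.e., it is an object of $\mathcal{C}$. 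The existence of the dotted factorisation $\Psi'$ and the commutativity of the diagram then follow formally from fullness of the inclusion $\mathcal{C}\hookrightarrow\mathrm{Mod}^{\mathrm{gr}}(\mathcal{A})$.

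The step I expect to be most delicate is the compatibility in the second point, namely matching the $\mathbb{C}^*$-weight grading arising from the geometric equivariance of $\pi^+\mathcal{M}$ with the algebraic grading that $\Psi$ prescribes through $\theta\in\mathcal{A}$. This amounts to unpacking the construction of $\Psi$ and observing that the commutation rules of Proposition \ref{pu}, in particular $[\theta,f]=df$ and $[\theta,\Delta]=-d\Delta$, preserve the lattice $\mathbb{Z}\subset\mathbb{Q}$ of $\theta$-eigenvalues; hence starting from integral-degree generators one never escapes integral degrees under the $\mathcal{A}$-action. Once this bookkeeping is in place, the theorem reduces to a transfer of $\mathbb{C}^*$-equivariance through the diagram.
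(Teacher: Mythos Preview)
Your argument is correct and reaches the same conclusion as the paper, but the mechanism you use for the crucial ``integral degree'' step is genuinely different. The paper does not invoke $\mathbb{C}^{*}$-equivariance at all. Instead it equips $\pi^{+}\mathcal{M}$ with a good filtration $\bigcup_{k}\mathcal{M}_{k}$, observes that each $\mathcal{M}_{k}$ is a coherent sheaf on $\mathbb{P}(V)$, and then applies Serre's theorem (\cite[Lemme~8]{S}): for $n\gg 0$ the twist $\mathcal{M}_{k}\otimes\mathcal{O}(n)$ is globally generated, and the pullback of such a global section to $V\setminus\{0\}$ is by construction homogeneous of degree $n\in\mathbb{Z}$. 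This produces explicit integral-degree generators of $\pi^{+}\mathcal{M}$, which then persist after $i_{+}$. Your route bypasses the filtration and Serre's theorem entirely by noting that $\pi^{+}\mathcal{M}$ is (strongly) $\mathbb{C}^{*}$-equivariant, hence its sections decompose into integer $\theta$-weights, and that $i_{+}$ respects this because $i$ is equivariant. Both arguments ultimately rest on the same geometric fact, namely $\pi_{*}\mathcal{O}_{V\setminus\{0\}}=\bigoplus_{n\in\mathbb{Z}}\mathcal{O}(n)$ with $\theta$ acting by $n$ on the $n$-th summand; the paper unpacks this through twists and global generation, while you package it as equivariance. Your version is shorter and more conceptual; the paper's version has the advantage of exhibiting concrete generators and citing classical results rather than equivariant formalism. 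On the first point, your characteristic-variety computation is actually more explicit than the paper's, which simply cites \cite[Theorem~6.2.1 and Corollary~5.4.8]{K-K} for the fact that $i_{+}\pi^{+}\mathcal{M}$ lies in $\mathrm{Mod}^{\mathrm{rh}}_{\Lambda}(\mathcal{D}_{V})$.
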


\begin{proof}
 Let 
$\mathcal{M}$ 
be a regular holonomic 
$\mathcal{D}_{\mathbb{P\left(V\right)}}$%
-module on the projective space
$\mathbb{P}\left(V\right)$
whose characteristic variety is contained in
$\widetilde{\Lambda}$
, i.e., 
$\mathcal{M}$
is an object of the category
$\mathrm{%
Mod}_{\widetilde{\Lambda }}^{\mathrm{rh}}\left( \mathcal{D}_{\mathbb{P}%
\left( V\right) }\right) $%
. By \cite[Corollary 5.4.8.]{K-K}) the inverse image of
$%
\mathcal{M}$ 
by the canonical projection 
$\pi :V\backslash \left\{ 0\right\}
\longrightarrow \mathbb{P}\left( V\right) $
is a regular holonomic $%
\mathcal{D}_{V\backslash \left\{ 0\right\} }$%
-module. Then \cite[Corollary 5.1.11.]{K-K} says that
$\pi ^{+}\left( \mathcal{M}\right) $ 
carries a good filtration 
$\mathcal{M}%
=\bigcup\limits_{k\in \mathbb{Z}}\mathcal{M}_{k}$%
. Since each
$\mathcal{M}_{k}$
is a coherent analytic sheaf on 
$\mathbb{P}\left(V\right)$
then there exists an integer
$n\left(\mathcal{M}_{k}\right)$
such that, for all
$n \geq n\left(\mathcal{M}_{k}\right)$
the 
$\mathcal{O}_{\mathbb{P}\left( V\right)} $%
-module
$\mathcal{M}%
_{k}\underset{\mathcal{O}_{\mathbb{P}\left( V\right) }}{\otimes }\mathcal{O}%
\left( n\right) $
is generated by its global sections 
$H^{0}\left( \mathbb{P}\left( V\right) ,%
\mathcal{M}_{k}\otimes \mathcal{O}\left( n\right) \right)$
(see \cite[Lemme 8]{S}). Put
$\mathcal{D}\left( n\right) :=\mathcal{O}\left( n\right)
\otimes \mathcal{D}_{\mathbb{P}\left( V\right) }\otimes \mathcal{O}\left(
-n\right) $)%
, we see that
$\mathcal{M}%
\otimes \mathcal{O}\left( n\right) $
is a 
$\mathcal{D}\left( n\right) $%
-module. Then the sections of the module
$\pi ^{\ast }\left( \mathcal{M}\otimes 
\mathcal{O}\left( n\right) \right) $
give the homogeneous sections of
integral degree 
$n$
in 
$\pi ^{+}\left( \mathcal{M}\right) $%
. Thus the inverse image
$\pi^{+}\left( \mathcal{M}\right) $ 
is generated over 
$\mathcal{D}%
_{V\backslash \left\{ 0\right\} }$
by finitely many homogeneous global
sections of '' integral degree''
$p\in \mathbb{Z}$%
.\\
Now, taking the direct image by the inclusion
$i: V \backslash\{0\} \xhookrightarrow{} V$
of the
$\mathcal{D}%
_{V\backslash \left\{ 0\right\} }$%
-module
$\pi^{+}\left( \mathcal{M}\right) $%
, we obtain a regular holonomic 
$\mathcal{D}_{V}$%
-module
${i_{+}\pi^{+}}\left( \mathcal{M}\right)$
in the category 
\textrm{Mod}$_{\Lambda }^{\mathrm{rh}}\left( \mathcal{D}%
_{V}\right) $
(see \cite[Theorem 6.2.1]
{K-K}).
According to what preceeds, this last module is generated over
$\mathcal{D}_{V}$
by its global sections homogeneous of integral degree.
By proposition \ref{tu} we know that
the functor 
$\Psi: {\mathrm{Mod}}_{\Lambda }^{\mathrm{rh}}\left( 
\mathcal{D}_{V}\right) \longrightarrow {\mathrm{Mod}}^{\mathrm{gr}}\left( 
\mathcal{A}\right) $
is an equivalence of categories. So, Applying
$\Psi$
on 
${i_{+}\pi^{+}}\left( \mathcal{M}\right)$
leads to a graded 
$\mathcal{A}$%
-module of finite type for the Euler vector field
$\theta$%
, i.e, 
$\Psi\left({i_{+}\pi^{+}}\left( \mathcal{M}\right)\right)$
is an of object of the category
${\mathrm{Mod}}^{\mathrm{gr}}\left( 
\mathcal{A}\right) $
. Since
$\Psi\left({i_{+}\pi^{+}}\left( \mathcal{M}\right)\right)$
is also generated by homogeneous sections of integral degree, then it is an object of the full subcategory
$\mathcal{C}$%
. This leads to a canonical functor
${\Psi^{\prime}}: \mathrm{Mod}_{\widetilde{\Lambda }}^{%
\mathrm{rh}}\left( \mathcal{D}_{\mathbb{P}\left( V\right) }\right) \longrightarrow  \mathcal{C}$
such that the above diagram commutes. Making the composition of the functor
${\Psi^{\prime}}$
with the functor
$\chi: \mathcal{C} \longrightarrow \mathcal{C}^{\prime}$%
,
we obtain the functor
$\widetilde{\Psi}: \mathrm{Mod}_{\widetilde{\Lambda }}^{%
\mathrm{rh}}\left( \mathcal{D}_{\mathbb{P}\left( V\right) }\right) \longrightarrow  \mathcal{C}^{\prime}$%
.
\end{proof}

\subsection{Construction of a functor
${\widetilde\Phi}: \mathcal{C}^{\prime} \longrightarrow
\mathrm{Mod}_{\widetilde{\Lambda }}^{\mathrm{rh}}\left( \mathcal{D}_{\mathbb{%
P}\left(V\right) }\right) $}
Recall (proposition \ref{tu}) that the functor  
$\Phi:  {\mathrm{Mod}}^{\mathrm{gr}}\left( 
\mathcal{A}\right) \longrightarrow 
{\mathrm{Mod}}_{\Lambda }^{\mathrm{rh}}\left( 
\mathcal{D}_{V}\right)$
is the quasi-inverse functor of
$\Psi $%
. Let 
$\Phi_{|\mathcal{C}} $
be the restriction of the functor
$\Phi$
on the full subcategory
$\mathcal{C}$%
. Consider
$T$ 
a graded 
$\mathcal{A}$%
-module in
$\mathcal{C}$%
. The inverse image of
$\Phi\left(T\right)$
by the inclusion
$i$
is the regular holonomic 
$\mathcal{D}_{V\backslash \left\{
0\right\} }$%
-module
$i^{+}\left( \mathcal{D}_{V}\underset{\mathcal{A}}{\otimes }%
{T}\right)
$%
.
The direct image of
this last module
by the canonical projection
$\pi$
is a regular holonomic 
$%
\mathcal{D}_{\mathbb{P}\left(V\right)} $%
-module
on the projective space
$%
{\mathbb{%
P}\left(V\right) } $%
, i.e., an object
of the category $%
\mathrm{Mod}_{\widetilde{\Lambda }}^{\mathrm{rh}}\left( \mathcal{D}_{\mathbb{%
P}\left(V\right) }\right) $%
.
We have defined the functor
$\Phi^\prime := \pi_+i^+\Phi_{|\mathcal{C}}: \mathcal{C} \longrightarrow \mathrm{Mod}_{\widetilde{\Lambda }}^{\mathrm{rh}}\left( \mathcal{D}_{\mathbb{%
P}\left(V\right) }\right)  $
obtained by the composition of the functors
$\Phi_{|\mathcal{C}} $
,
$i^+ $
and
$\pi_+$%
. We prove that this functor induces a functor
${\widetilde\Phi}: \mathcal{C}^{\prime} \longrightarrow
\mathrm{Mod}_{\widetilde{\Lambda }}^{\mathrm{rh}}\left(\mathcal{D}_{\mathbb{P}\left(V\right)}\right)$%
.
\begin{proposition}\label{td}
The functor
$\Phi^\prime :=\pi_+i^+\Phi_{|\mathcal{C}}: \mathcal{C} \longrightarrow{\mathrm{Mod}}_{\Lambda }^{\mathrm{rh}}\left( 
\mathcal{D}_{V}\right) $
induces a functor
${\widetilde\Phi}: \mathcal{C}^{\prime} \longrightarrow
\mathrm{Mod}_{\widetilde{\Lambda }}^{\mathrm{rh}}\left( \mathcal{D}_{\mathbb{%
P}\left(V\right) }\right) $
which is faithful.
\end{proposition}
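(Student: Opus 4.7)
The statement has two parts, which I would attack separately: that $\Phi^\prime$ factors through $\chi: \mathcal{C} \to \mathcal{C}^\prime$, and that the induced $\widetilde{\Phi}$ is faithful. The plan is to apply the universal property of the Serre quotient for the first, and then to combine exactness of the three composed functors with a support-detection argument for the second.

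For the factorization, by the universal property of the Serre quotient \cite[12.10.6]{H} it suffices to check that $\Phi^\prime$ vanishes on every object of $\mathcal{C}_{0}$. So let $T \in \mathcal{C}_{0}$, i.e., $T$ is supported at $\{0\}$. Since $\Phi$ is a quasi-inverse of the equivalence $\Psi$ (Proposition \ref{tu}), it preserves supports, so $\Phi(T) = \mathcal{D}_{V}\otimes_{\mathcal{A}} T$ is a regular holonomic $\mathcal{D}_{V}$-module supported at $\{0\}$. Hence $i^{+}\Phi(T) = 0$ because $i$ is the open embedding of $V\setminus\{0\}$, and therefore $\Phi^\prime(T) = \pi_{+} i^{+}\Phi(T) = 0$. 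This produces the required $\widetilde{\Phi}: \mathcal{C}^\prime \to \mathrm{Mod}_{\widetilde{\Lambda}}^{\mathrm{rh}}(\mathcal{D}_{\mathbb{P}(V)})$ with $\widetilde{\Phi}\circ\chi = \Phi^\prime$.

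For faithfulness, I would use the explicit description of morphisms in the Serre quotient: a morphism $\bar{\alpha}: T_{1}\to T_{2}$ in $\mathcal{C}^\prime$ is represented by a roof $T_{1}\hookleftarrow T_{1}' \xrightarrow{\alpha} T_{2}/T_{2}''$ with $T_{1}/T_{1}',\, T_{2}'' \in \mathcal{C}_{0}$, and $\bar{\alpha}=0$ in $\mathcal{C}^\prime$ if and only if $\mathrm{im}(\alpha)\in\mathcal{C}_{0}$. Suppose $\widetilde{\Phi}(\bar{\alpha}) = \Phi^\prime(\alpha) = 0$. The functor $\Phi$ is exact (quasi-inverse of an equivalence of abelian categories), $i^{+}$ is exact (open restriction), and $\pi_{+}$ is $t$-exact on the relevant class because $\pi$ is an affine morphism: the preimage of a standard chart $U_{i}\subset\mathbb{P}(V)$ is the affine open $\{x_{i}\neq 0\}\subset V$. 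Therefore $\Phi^\prime$ is exact, and applying it to the sequence $0\to\ker\alpha\to T_{1}'\to\mathrm{im}\,\alpha\to 0$ yields $\Phi^\prime(\mathrm{im}\,\alpha) = \mathrm{im}(\Phi^\prime(\alpha)) = 0$.

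The crux, which I expect to be the main obstacle, is upgrading $\Phi^\prime(N)=0$ to $N\in\mathcal{C}_{0}$ for $N\in\mathcal{C}$. The argument would proceed by showing that $\pi_{+}$ is faithful on the subcategory of $\mathcal{D}_{V\setminus\{0\}}$-modules of the form $i^{+}\Phi(N)$. Such modules decompose under the Euler field $\theta$ into integer-weight components (by the defining property of $\mathcal{C}$), each of which is $\pi^{+}$ of a $\mathcal{D}_{\mathbb{P}(V)}$-module twisted by a line bundle $\mathcal{O}(k)$. Since $\pi$ is affine, $\pi_{+}$ separates and recovers these components, so $\pi_{+}i^{+}\Phi(N)=0$ forces $i^{+}\Phi(N)=0$. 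Then $\Phi(N)$ is supported at $\{0\}$ and, invoking Proposition \ref{tu} once more, $N\in\mathcal{C}_{0}$. This gives $\mathrm{im}(\alpha)\in\mathcal{C}_{0}$ and hence $\bar{\alpha}=0$, completing the proof of faithfulness.
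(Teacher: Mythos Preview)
Your approach is essentially the same as the paper's: both obtain the factorization from the universal property of the Serre quotient \cite[Lemma 12.10.6]{H} after checking $\mathcal{C}_0\subset\mathrm{Ker}(\Phi')$, and both derive faithfulness from the equality $\mathrm{Ker}(\Phi')=\mathcal{C}_0$. The only difference is packaging: the paper simply asserts this equality and then invokes \cite[Lemma 12.10.7]{H}, whereas you unfold the roof description of morphisms in $\mathcal{C}'$ and argue directly that $\Phi'(N)=0$ forces $N\in\mathcal{C}_0$, supplying the support-preservation and the conservativity of $\pi_+$ on integer-weight modules that the paper leaves implicit.
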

\begin{proof}
Recall that
$\mathcal{C}%
_{0}\subset \mathcal{C}$ 
(the subcategory of graded modules supported by
$ 0$%
)
is a Serre subcategory.
We have defined an exact functor
$\Phi^{\prime}:=
\pi_+i^+\Phi_{|\mathcal{C}}: \mathcal{C} \longrightarrow \mathrm{Mod}_{\widetilde{\Lambda }}^{\mathrm{rh}}\left( \mathcal{D}_{\mathbb{%
P}\left(V\right) }\right)  $
such that
the subcategory
$\mathcal{C}_0$
is contained in the kernel of the functor
$\Phi^{\prime}$%
, i.e.,
$\mathcal{C}_0 \subset \mathrm{Ker}\left(
\Phi^\prime\right)
$%
. Note that according to
\cite[Lemma 12.10.6]{H}, the quotient 
category
$\mathcal{C}^{\prime}$
and the functor
$\chi: \mathcal{C} \longrightarrow \mathcal{C}^{\prime}$
are characterized by the universal property:
Since
$\Phi^{\prime}$
is such that
$\mathcal{C}_0\subset \mathrm{Ker}\left(
\Phi^{\prime}\right)
$%
,
then there exits a factorization
$\Phi^\prime
= \widetilde{\Phi}\circ \chi$
for a unique exact functor
${\widetilde\Phi}: \mathcal{C}^{\prime} \longrightarrow
\mathrm{Mod}_{\widetilde{\Lambda }}^{\mathrm{rh}}\left( \mathcal{D}_{\mathbb{%
P}\left(V\right) }\right) $%
. Moreover we can see that the Serre subcategory
$\mathcal{C}%
_{0}$ 
is exatly the kernel of the functor
$ \Phi^\prime$%
, i.e.,
$\mathcal{C}_0 = \mathrm{Ker}\left(\Phi^\prime\right)$%
. Then by
\cite[Lemma 12.10.7]{H}
the induced functor
$ \widetilde{\Phi}:\mathcal{C}^\prime \longrightarrow \mathrm{Mod}_{\widetilde{\Lambda }}^{\mathrm{rh}}\left( \mathcal{D}_{\mathbb{%
P}\left(V\right) }\right) $ 
is faithfull.
\end{proof}

\subsection{Equivalence of categories}
The functor
$ \widetilde{\Phi}$
is essentially surjective:
Indeed, let
$\mathcal{M}\in \mathrm{Mod}_{\widetilde{\Lambda }}^{\mathrm{rh}}\left( \mathcal{D}_{\mathbb{%
P}\left(V\right) }\right) $ 
be a regular holonomic
$\mathcal{D}_{\mathbb{%
P}\left(V\right) }$%
-module on the projective space
$\mathbb{%
P}\left(V\right) $
. Define
$T := \widetilde{\Psi}\left(\mathcal{M}\right)$%
, then
$\widetilde{\Phi}\left(T\right) =\left(\widetilde{\Phi}\circ \widetilde{\Psi}\right)\left(\mathcal{M}\right) = 
\left(\widetilde{\Phi}\circ\chi\circ\Psi^\prime\right)
\left(\mathcal{M}\right) = \left(\Phi^\prime\circ\Psi^\prime\right)\left(\mathcal{M}\right) =
\pi_+i^{+}\left(\Phi_{|\mathcal{C}}\circ\Psi\right) {i_{+}\pi^{+}}
\left(\mathcal{M}\right) = \pi_+i^{+}{i_{+}\pi^{+}}
\left(\mathcal{M}\right) 
\simeq \mathcal{M}
$
since
$\Phi\circ{\Psi} = {\rm Id}_{\mathrm{Mod}_{\widetilde{\Lambda }}^{\mathrm{rh}}\left( \mathcal{D}_{\mathbb{%
P}\left(V\right) }\right) }$
(see proposition \ref{tu})%
. As
$\widetilde{\Phi}$
is also faithful
(see proposition \ref{td}), we 
obtain the following theorem:
\begin{theorem}
\label{tt}The
category
$\mathrm{Mod}_{\widetilde{\Lambda }}^{%
\mathrm{rh}}\left( \mathcal{D}_{\mathbb{P}\left( V\right) }\right) $ 
and the
quotient category 
$\mathcal{C}^{\prime}$
are equivalent.

\end{theorem}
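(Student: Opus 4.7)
The strategy is to verify that the two functors $\widetilde{\Psi}$ and $\widetilde{\Phi}$ already constructed in Theorem~\ref{t00} and Proposition~\ref{td} are mutually quasi-inverse. Since $\widetilde{\Phi}$ is known to be faithful (Proposition~\ref{td}), it suffices to exhibit a natural isomorphism $\widetilde{\Phi}\circ\widetilde{\Psi}\simeq \mathrm{Id}$ on $\mathrm{Mod}_{\widetilde{\Lambda}}^{\mathrm{rh}}(\mathcal{D}_{\mathbb{P}(V)})$ (which gives essential surjectivity of $\widetilde{\Phi}$ and faithfulness of $\widetilde{\Psi}$) and the symmetric natural isomorphism $\widetilde{\Psi}\circ\widetilde{\Phi}\simeq \mathrm{Id}$ on $\mathcal{C}^\prime$.

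The first isomorphism is obtained by unfolding the definitions along the commutative diagram of Theorem~\ref{t00}. For $\mathcal{M}\in \mathrm{Mod}_{\widetilde{\Lambda}}^{\mathrm{rh}}(\mathcal{D}_{\mathbb{P}(V)})$, set $T:=\widetilde{\Psi}(\mathcal{M})=\chi\Psi^\prime(\mathcal{M})$. Using the factorization $\widetilde{\Phi}\circ\chi=\Phi^\prime=\pi_+ i^+ \Phi_{|\mathcal{C}}$ and the fact from the commutativity of the diagram that $\Psi^\prime(\mathcal{M})$ corresponds under the inclusion $\mathcal{C}\hookrightarrow \mathrm{Mod}^{\mathrm{gr}}(\mathcal{A})$ to $\Psi(i_+\pi^+\mathcal{M})$, we compute
\[
\widetilde{\Phi}\widetilde{\Psi}(\mathcal{M})\;=\;\pi_+ i^+ \bigl(\Phi\circ\Psi\bigr)\bigl(i_+\pi^+\mathcal{M}\bigr)\;\simeq\;\pi_+ i^+ i_+\pi^+\mathcal{M},
\]
where the second step uses $\Phi\circ\Psi\simeq \mathrm{Id}$ from Proposition~\ref{tu}. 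So the question reduces to two geometric identities on the $\mathcal{D}$-module side.

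The hard part will be justifying the two remaining identities $i^+ i_+\simeq \mathrm{Id}$ and $\pi_+\pi^+\simeq \mathrm{Id}$. The first is standard: since $i$ is an open immersion, $i^+i_+\simeq \mathrm{Id}$ on $\mathcal{D}_{V\setminus\{0\}}$-modules. The second is more delicate because $\pi\colon V\setminus\{0\}\to \mathbb{P}(V)$ is a $\mathbb{G}_m$-bundle, so a priori $\pi_+\pi^+$ involves the de Rham cohomology of the fiber. The plan is to exploit the $\mathbb{G}_m$-equivariant structure: any $\pi^+\mathcal{M}$ decomposes canonically into weight components under the Euler action, and integrating along the fibre singles out the weight-zero (integral degree) part, which recovers $\mathcal{M}$. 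This is precisely the content of the integral-degree filtration used in the proof of Theorem~\ref{t00}, so the identification can be carried out by filtering $\pi^+\mathcal{M}$ by the good filtration coming from $\mathcal{M}_k\otimes\mathcal{O}(n)$ and passing to global sections degree by degree.

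Finally, for the reverse natural isomorphism $\widetilde{\Psi}\widetilde{\Phi}\simeq \mathrm{Id}_{\mathcal{C}^\prime}$: given $\overline{T}=\chi(T)\in\mathcal{C}^\prime$ with $T\in\mathcal{C}$, the same unfolding gives $\widetilde{\Psi}\widetilde{\Phi}(\overline{T})=\chi\Psi^\prime(\pi_+ i^+ \Phi T)=\chi\Psi(i_+\pi^+\pi_+ i^+ \Phi T)$. Using $i^+i_+\simeq\mathrm{Id}$ and the weight-zero argument, $i_+\pi^+\pi_+i^+ \Phi T$ differs from $\Phi T$ only by a $\mathcal{D}_V$-module supported at $\{0\}$; under $\Psi$ this becomes an object of $\mathcal{C}_0$, which is killed by $\chi$. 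Hence $\widetilde{\Psi}\widetilde{\Phi}(\overline{T})\simeq \chi\Psi\Phi(T)\simeq \chi(T)=\overline{T}$. Combined with the previous step, $\widetilde{\Psi}$ and $\widetilde{\Phi}$ form a pair of quasi-inverse equivalences, which is the statement of Theorem~\ref{tt}.
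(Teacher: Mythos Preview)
Your approach coincides with the paper's for the direction $\widetilde{\Phi}\circ\widetilde{\Psi}\simeq\mathrm{Id}$: the paper performs exactly the same chain
\[
\widetilde{\Phi}\widetilde{\Psi}(\mathcal{M})=(\Phi'\circ\Psi')(\mathcal{M})=\pi_{+}i^{+}(\Phi\circ\Psi)i_{+}\pi^{+}(\mathcal{M})=\pi_{+}i^{+}i_{+}\pi^{+}(\mathcal{M})\simeq\mathcal{M},
\]
and then stops, simply asserting the last isomorphism. You go further in two respects, both improvements. First, you isolate the genuine content of $\pi_{+}i^{+}i_{+}\pi^{+}\simeq\mathrm{Id}$ and sketch the reason: $i^{+}i_{+}\simeq\mathrm{Id}$ because $i$ is open, while $\pi_{+}\pi^{+}\simeq\mathrm{Id}$ uses that $\pi$ is a $\mathbb{G}_{m}$-bundle and $\pi^{+}\mathcal{M}$ is monodromic with integral weights, so fibre integration recovers $\mathcal{M}$; the paper says nothing here. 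Second, you supply the reverse composition $\widetilde{\Psi}\circ\widetilde{\Phi}\simeq\mathrm{Id}_{\mathcal{C}'}$ via the observation that $i_{+}\pi^{+}\pi_{+}i^{+}\Phi(T)$ and $\Phi(T)$ differ by something supported at $0$, hence killed by $\chi$. The paper omits this entirely and instead concludes from ``$\widetilde{\Phi}$ is essentially surjective and faithful'', which as stated is not enough to force an equivalence; your argument closes that gap. In short: same skeleton, but your version actually justifies the two points the paper leaves implicit.
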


\section{Example\label{3.3}: Quiver description of regular holonomic
$\mathcal{D}_{\mathbb{P}(V)}$%
-modules on the projective space of complex skew-symmetric matrices}

Let 
$\left(G, V\right) = \left( GL\left(2m, \mathbb{C}\right), \Lambda^2\left(\mathbb{C}^N\right)\right)$
be the 
$2m\times 2m$%
-skew-symmetric matrices under the action of general linear group. In this case we classify the objects of the quotient category
$\mathcal{C}%
^{\prime}:=\mathcal{C}/\mathcal{C}_{0}$%
.
Let
$X = (x_{ij})$
be a 
$2m\times{2m}$%
-skew-symmetric matrix, 
$\frac{\partial}{\partial{X}} =\left(\frac{\partial}{\partial{x_{ij}}}\right) \in \mathcal{D}_V$%
, we denote by
$f := pf(X)$
be the pfaffian for
$X$%
,
$\Delta := f\left(\frac{\partial}{\partial{X}}\right)$
and
$\theta$
the Euler vector field on
$\Lambda^2\left(\mathbb{C}^N\right)$%
. One knows (see \cite[Corollary 8]{N2}) that the quotient algebra of
$SL\left(2m, \mathbb{C}\right)$%
-invariant differential operators
$\mathcal{A}$
is generated by 
$f$%
,
$\Delta$%
, 
$\theta$
such that
\begin{equation*}
\begin{array}{ccccccc}
\left[ \theta ,f\right] = mf \text{,} & \left[ \theta ,\Delta 
\right] =-m\Delta \text{,} &  f\Delta
=\prod\limits_{j=0}^{m-1}\left( \frac{\theta }{m} + 2j\right) \text{, } & 
\Delta f = \prod\limits_{j=0}^{m-1}\left( \frac{\theta }{m} + 2j + 1\right) \text{%
.}
\end{array}
\end{equation*}
Actually the quotient category
$\mathcal{C}%
^{\prime }:=\mathcal{C}/\mathcal{C}_{0}$ 
is a quiver category of linear spaces and maps satisfying commutation relations.
So the objects of
$\mathcal{C}^{^{\prime}}$
can be encoded by means of finite diagrams of linear maps. Indeed, a graded
$%
\mathcal{A}$%
-module 
$T$ 
in 
$\mathcal{C}^{^{\prime }}$
defines an infinite diagram consisting of finite dimensional complex vector spaces
$T_{p}$
(with 
$\left( \theta - p\right) $
being nilpotent on each
$T_{p}$%
, 
$p\in \mathbb{Z}$%
) and linear maps between them deduced from the action of 
$\theta $%
, 
$f$%
,
$\Delta $%
: 
\begin{equation}
\cdots \rightleftarrows T_{p}\overset{f}{\underset{\Delta }{%
\rightleftarrows }}T_{p+m}\rightleftarrows \cdots  \label{R0}
\end{equation}
satisfying the above relations and the following
$\left(
\theta - p\right) T_{p}\subset T_{p}$, 
\begin{equation}
 f\Delta = \prod\limits_{j=0}^{m-1}\left( \frac{\theta }{m} + 2j\right), \quad
 \Delta{f} = \prod\limits_{j=0}^{m-1}\left( \frac{\theta }{m} + 2j + 1\right)\text{ on }%
T_{p}\text{.}  \label{R}
\end{equation}

\noindent Any object of the quotient category
$\mathcal{C}^{^{\prime
\prime }}$
is a diagram 
$\widetilde{T}=T$ \textbf{modulo} $\mathcal{C}_{0}$ ($%
T\in $
$\mathcal{C}^{^{\prime }}$) 
\begin{equation}
\cdots \rightleftarrows T_{p}\overset{f }{\underset{\Delta }{%
\rightleftarrows }}T_{p+m}\rightleftarrows \cdots \text{\textbf{modulo} }%
\mathcal{C}_{0}\text{, \ \ }p\in \mathbb{Z}\text{,}
\end{equation}
satisfying the previous relations (\ref{R}). 
$\widetilde{T}$
is described by a finite subset of objects and arrows. From
\cite[section 6, page 130]{N2} we deduce the following:\newline

\noindent $\left( a\right) $\noindent\ If 
$p\equiv 0 \mod m$, then $\widetilde{T}$
is encoded by a diagram of $2m$ elements 
\begin{equation}
\begin{array}{ccccccccc}
T_{-2m(m-1)} & \overset{f }{\underset{\Delta }{\rightleftarrows }} & 
T_{-2m\left( m-2\right) } & \overset{f}{\underset{\Delta }{%
\rightleftarrows }} & \cdots & T_{-m} & \overset{%
f}{\underset{\Delta }{\rightleftarrows }} & T_{0}
\end{array}
\text{\textbf{modulo} }\mathcal{C}_{0}\text{.}
\end{equation}
In the other degrees 
$f $ 
or 
$\Delta $ 
are bijective since
$%
T_{0}\simeq f^{k}T_{0}\simeq T_{mk}$ 
and 
$T_{-(2m-1)m}\simeq \Delta
^{k}T_{-(2m-1)m}\simeq T_{-(2m-1 +k)m}$ ($k\in \mathbb{N}$) 
thanks to the
relations (\ref{R}).\newline

\noindent $\left( b\right) $ If
$p$
is an integer 
(
$p\not\equiv 0 \mod m $)
, then 
$f $ 
and 
$\Delta $ 
are bijective. So the diagram
$\widetilde{T%
}$ 
is defined by one element 
$T_{p}$ 
\textbf{modulo} $\mathcal{C}_{0}$ 
with the nilpotent action of $%
\left( \theta -p\right) $.\newline

\noindent\textbf{Acknowledgements.} The author is grateful to the Max-Planck Institute for Mathematics in Bonn for its hospitality and financial support.

\end{document}